\documentclass[11pt, letterpaper, english]{amsart}
\usepackage{amsmath}
\usepackage{amsthm}
\usepackage{amssymb}
\usepackage[dvipsnames]{xcolor}
\usepackage{amscd}
\usepackage{mathtools}
\usepackage{booktabs}
\usepackage[boxed]{algorithm2e}
\usepackage{subcaption}
\usepackage{array}
\usepackage[cal=boondoxo,scr=euler]{mathalfa}
\usepackage[backref=page,linktocpage]{hyperref}
\usepackage{cleveref}
\usepackage{caption}
\usepackage{graphics,graphicx}
\usepackage{tikz,tikz-cd}
\usetikzlibrary{graphs,graphs.standard,calc,decorations.pathreplacing,automata}
\usepackage{enumerate}
\DeclareMathAlphabet{\mathsf}{OT1}{\sfdefault}{m}{n}
\newcommand{\nocontentsline}[3]{}
\newcommand{\tocless}[2]{\bgroup\let\addcontentsline=\nocontentsline#1{#2}\egroup}
\usepackage[margin=1.5in]{geometry}
\usepackage{verbatim}
\usepackage{scalerel}
\makeatletter
\def\dual#1{\expandafter\dual@aux#1\@nil}
\def\dual@aux#1/#2\@nil{\begin{tabular}{@{}c@{}}#1\\#2\end{tabular}}
\@namedef{subjclassname@2020}{\textup{2020} Mathematics Subject Classification}
\makeatother

\DeclareMathAlphabet{\amathbb}{U}{bbold}{m}{n}

\hypersetup{colorlinks=true,linkbordercolor=white,linkcolor=NavyBlue,
 anchorcolor=black,citecolor=NavyBlue,filecolor=cyan,menucolor=NavyBlue,
 runcolor=cyan,urlcolor=NavyBlue,
 pdftitle={IDP polytopes with nonunimodal Ehrhart h-star-polynomials},
 pdfauthor={Luis Ferroni}}
\newtheoremstyle{teoremas}{12pt}{13pt}{\itshape}{}{\bfseries}{}{.5em}{}
\theoremstyle{teoremas}
\newtheorem{theorem}{Theorem}[section]
\newtheorem{corollary}[theorem]{Corollary}

\newtheorem{proposition}[theorem]{Proposition}
\newcommand{\repeatedtheoremname}{}
\newtheorem*{repeatedtheorem}{\repeatedtheoremname}
\newenvironment{restate}[1]{%
  \renewcommand{\repeatedtheoremname}{Theorem~\hyperref[#1]{\ref*{#1}}}%
  \begin{repeatedtheorem}%
}{\end{repeatedtheorem}}
\newtheoremstyle{definition}{12pt}{12pt}{}{}{\bfseries}{}{.5em}{}
\theoremstyle{definition}

\newtheorem{conjecture}[theorem]{Conjecture}

\crefname{theorem}{theorem}{theorems}
\Crefname{theorem}{Theorem}{Theorems}
\crefname{lemma}{lemma}{lemmas}
\Crefname{lemma}{Lemma}{Lemmas}
\crefname{proposition}{proposition}{propositions}
\Crefname{proposition}{Proposition}{Propositions}
\crefname{corollary}{corollary}{corollaries}
\Crefname{corollary}{Corollary}{Corollaries}

\newcommand{\R}{\mathbb{R}}
\newcommand{\Z}{\mathbb{Z}}

\DeclareMathOperator*{\conv}{conv}

\newcommand{\cC}{\mathcal C}
\AtBeginDocument{\def\MR#1{}}

\title{Unimodality shenanigans in Ehrhart theory}
\author[L.~Ferroni]{Luis Ferroni}
\address{(L. Ferroni) Dipartimento di Matematica, Universit\`a di Pisa, Pisa, Italy.}
\email{luis.ferroni@unipi.it}
\keywords{Ehrhart polynomials, $h^*$-polynomials, integer decomposition property,
 unimodality, log-concavity, Nakajima polytopes, regular unimodular triangulations.}
\date{September 8, 2026}
\subjclass[2020]{Primary: 52B20; Secondary: 05A20, 13F55, 52B11.}
\allowdisplaybreaks

\begin{document}
\raggedbottom
\begin{abstract} 
We show the existence of counterexamples to a four-decade-old conjecture attributed to Stanley concerning the unimodality of $h^*$-polynomials of IDP polytopes. As additional applications of our main constructions, we also disprove a conjecture by Brenti on the log-concavity of $h^*$-polynomials of Gorenstein IDP polytopes, and a conjecture by Ferroni and Higashitani concerning the log-concavity of the Ehrhart series of IDP polytopes. We also answer their question about the existence of very ample polytopes with non-log-concave interior Ehrhart series. Our class of examples arises by taking Cayley sums of rectangular prisms, and hence they possess regular unimodular flag triangulations by a result of Haase, Paffenholz, Piechnik and Santos. For the unimodality conjecture, we can even find smooth counterexamples.
\end{abstract}

\maketitle

\section{Introduction}\label{sec:introduction}

A lattice polytope $P$ is said to have the \emph{integer decomposition property} (IDP) if for every positive integer $k$, each lattice point $p\in kP\cap\mathbb{Z}^n$ can be written as a sum $p = p_1+\cdots+p_k$ where $p_1,\ldots,p_k\in P\cap \mathbb{Z}^n$. If this holds instead only for sufficiently large $k$, the polytope $P$ is said to be \emph{very ample}. The notions of IDP and very ample polytopes are of crucial relevance within the theory of toric varieties (see \cite[Chapter~1]{cox-little-schenck}).

The Ehrhart polynomial of a lattice polytope $P\subseteq \mathbb{R}^n$ is defined as the polynomial $E_P(t) \in \mathbb{Q}[t]$ such that $E_P(m)$ equals the number of lattice points in the $m$-th dilation of $P$, for every $m\in \mathbb{Z}_{\geq 1}$. The reason $E_P(t)$ is a well-defined polynomial is a remarkable result by Ehrhart \cite{ehrhart}. For a basic treatment on Ehrhart theory we suggest Beck and Robins \cite{beck-robins}.

For every polynomial $p(t)\in \mathbb{R}[t]$ of degree $d$, one can find another polynomial $h(z)\in \mathbb{R}[z]$ of degree at most $d$ such that
    \[ \sum_{m\geq 0} p(m)\, z^m = \frac{h(z)}{(1-z)^{d+1}}.\]
When $p(t)$ is the Ehrhart polynomial of a lattice polytope $P$, the numerator of the right-hand side in the above display is often denoted by $h^*_P(z)$ and termed the \emph{$h^*$-polynomial of $P$}.

The following conjecture is often attributed to Stanley, and is a weakening of a special case of \cite[Conjecture~4]{stanley-conj}. It was explicitly posed as a question also by Schepers and van Langenhoven in \cite[Question~1.1]{schepers-vanlangenhoven}.

\begin{conjecture}\label{conj:stanley}
    If $P$ is an IDP polytope, then the $h^*$-polynomial of $P$ is unimodal.
\end{conjecture}

Recall that a polynomial $h(z) = h_0 + h_1z + \cdots + h_sz^s$ is said to be \emph{unimodal} if $h_0 \leq \cdots \leq h_{j-1}\leq h_j \geq h_{j+1} \geq \cdots \geq h_s$ for some index $j$.
The history behind this conjecture is quite remarkable. There have been multiple proposed generalizations and special cases throughout the last four decades, by different authors from all kinds of mathematical backgrounds, including pure combinatorics, algebraic geometry, and commutative algebra. We refer to \cite{braun,ferroni-higashitani} for a detailed survey concerning the above conjecture and several of its variations; to name a few: Mustata and Payne \cite{mustata-payne} and Payne \cite{payne} showed the existence of non IDP reflexive polytopes whose $h^*$-unimodal, settling in the negative a conjecture previously posed by Hibi \cite{hibi}.

The main result of this article is the following.

\begin{theorem}\label{thm:main-unimodality}
    There exist infinitely many IDP lattice polytopes for which the $h^*$-polynomial is not unimodal. 
\end{theorem}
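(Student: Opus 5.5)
The plan is to construct explicit IDP polytopes as Cayley sums of lattice boxes (rectangular prisms), compute their $h^*$-polynomials in closed form, and exhibit a dip. For lattice polytopes $Q_0,\dots,Q_r\subseteq\R^n$, the Cayley sum $\cC(Q_0,\dots,Q_r)=\conv\bigl(\bigcup_i Q_i\times\{e_i\}\bigr)\subseteq\R^{n}\times\R^{r+1}$ has a well-understood Ehrhart series. The lattice points of its $m$-th dilate at height $(a_0,\dots,a_r)$ with $\sum a_i=m$ are exactly the lattice points of the Minkowski sum $a_0Q_0+\cdots+a_rQ_r$. We take every $Q_i$ to be a box $[0,b_{i1}]\times\cdots\times[0,b_{in}]$. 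Then a Minkowski combination of boxes is again a box, and
\[
\#\bigl((a_0Q_0+\cdots+a_rQ_r)\cap\Z^n\bigr)=\prod_{j=1}^n\Bigl(1+\sum_i a_i b_{ij}\Bigr).
\]
Hence $E_P(m)=\sum_{a_0+\cdots+a_r=m}\prod_j(1+\sum_i a_ib_{ij})$, which is explicit and, after expanding the product, a sum of generating functions of the form $\sum_{a}\prod a_i^{k_i}z^{|a|}$. These are computable via Eulerian-type numerators.

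For the IDP property, I would invoke the result of Haase, Paffenholz, Piechnik and Santos mentioned in the abstract. Cayley sums of boxes (more generally of polytopes whose edge directions come from a unimodular system such as the coordinate directions) admit regular unimodular triangulations, so they are IDP. Alternatively, one can argue directly: a lattice point of $kP$ lies in a box $\sum a_iQ_i$ with $\sum a_i=k$, and such boxes decompose coordinatewise into sums of lattice points of the summands. This is a one-dimensional interval argument performed in each coordinate. This step should be routine.

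The main obstacle is choosing the parameters so that the $h^*$-vector is non-unimodal. Heuristically, Cayley sums with $r$ small and boxes that are long in some coordinates produce $h^*$-vectors that look like a sum of two "bumps" at different degrees. The dimension of $P$ is $n+r$, and the $h^*$ contributions coming from the prism factors are shifted relative to the Cayley (simplex) directions. I would first try $r=1$, i.e.\ two boxes $Q_0,Q_1$, taking $Q_0$ very thin (say a unit cube or a point in many coordinates) and $Q_1$ very long in a few coordinates. Then I would write $h^*_P(z)$ as a combination of products of Eulerian polynomials $A_k(z)$ weighted by powers of the $b_{ij}$. In the regime where some $b_{ij}\to\infty$, the leading terms concentrate mass around two separated degrees, e.g.\ near $1$ and near $\dim P$ minus something, while the middle coefficients grow more slowly. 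A computer search over small $n,r,b_{ij}$ will locate an explicit violation $h_{j-1}>h_j<h_{j+1}$. Infinitely many examples would then follow by letting a parameter (a box side length $b\to\infty$, or the number of repeated coordinates) grow. The dip inequality becomes a comparison of leading-order polynomials in $b$, which one verifies by comparing degrees and leading coefficients in $b$ of the three relevant $h^*_k$. Alternatively, one can take free joins/products with unimodular simplices in a way that preserves IDP and the dip. This asymptotic comparison is where I expect the real work to lie.
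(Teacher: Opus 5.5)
\textbf{There is a genuine gap: the proposal never produces a polytope whose $h^*$-vector actually dips.}

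Your framework is the paper's. It uses Cayley sums of coordinate boxes and the product formula $E_P(m)=\sum_{a_0+\cdots+a_r=m}\prod_j\bigl(1+\sum_i a_ib_{ij}\bigr)$. IDP comes from Haase--Paffenholz--Piechnik--Santos, and your coordinatewise decomposition argument is also correct and suffices. You pass from one example to infinitely many by joining with unimodular simplices, which is exactly the paper's lattice-pyramid trick of appending a zero column.

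What is missing is the step that carries all of the content: exhibiting a matrix $(b_{ij})$ whose $h^*$-vector dips. ``A computer search will locate an explicit violation'' and the unexecuted leading-order comparison in $b$ are expectations, not arguments. Nothing in the proposal excludes that every member of this class is $h^*$-unimodal. This is not a formality. The paper's witness is $40$-dimensional, and its three relevant coefficients $h^*_{18},h^*_{19},h^*_{20}$ lie within about two percent of one another.

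Moreover, the regime you propose to explore first points the wrong way. From the product formula, $\deg h^*\le n$ while $\dim P=n+r$. Since these polytopes have regular unimodular triangulations, Athanasiadis's theorem forces $h^*_j$ to be weakly decreasing for $j$ from about $\dim P/2$ up to $\dim P$. For $r=1$ this covers the whole upper half, so your picture of a second bump ``near $\dim P$ minus something'' cannot occur; a dip would have to live in the lower half.

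The paper instead puts the dip at the very top of the $h^*$-vector. That is possible only if $n$ is at most about $(n+r)/2$, i.e.\ $r$ is at least about $n$. It takes $r=d=n=20$ and $A=[\alpha\mathbf 1\mid\beta I]$. Reciprocity then expresses the top coefficients through interior lattice points of $(n+1)P$, $(n+2)P$, $(n+3)P$, which involve only a handful of compositions. With $x=\alpha+\beta-1$ one gets $h^*_n=x^n$ and $h^*_{n-1}=(x+\alpha)^n-x^n-n(\alpha-1)x^{n-1}$, and a similar expression for $h^*_{n-2}$.

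The ratio $\alpha/x$ must be small enough to force $h^*_{n-1}<h^*_n$ but large enough to keep $h^*_{n-2}>h^*_{n-1}$; as $\alpha/x\to0$ one gets $h^*_{n-2}<h^*_{n-1}$. The choice $\alpha=50$, $\beta=800$ achieves both, as checked by exact integer arithmetic. Some such explicit choice, together with its verification, is the missing ingredient; without it the proposal does not prove the theorem.
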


Furthermore, one can take these polytopes to possess regular unimodular flag triangulations. A well-known property is that the existence of a unimodular triangulation automatically implies the IDP (see \cite{deloera-rambau-santos,haase-paffenholz-piechnik-santos} for more on unimodular triangulations). To the author's taste, that they can be taken to be regular and flag is somewhat surprising, as the flagness condition is known to imply heavy restrictions on the shape of the $h^*$-polynomial (see \cite[Theorem~3.14]{ferroni-higashitani}).

In very recent years, two breakthroughs have been obtained concerning Conjecture~\ref{conj:stanley}. On one hand, Adiprasito, Papadakis, Petrotou, and Steinmeyer \cite{adiprasito-papadakis-petrotou-steinmeyer} and Adiprasito, Papadakis, and Petrotou \cite{adiprasito-papadakis-petrotou} proved that Conjecture~\ref{conj:stanley} is indeed true under the additional assumption of Gorensteinness. On the other hand, Hofscheier, Kurylenko, and Nill \cite{hofscheier-kurylenko-nill1} showed that Conjecture~\ref{conj:stanley} becomes actually false if one asks about log-concavity instead. Recall that a polynomial $h(z) = h_0 + h_1z+\cdots+h_sz^s$ with nonnegative coefficients is said to be \emph{log-concave} if $h_i^2\geq h_{i-1}h_{i+1}$ for each index $0< i < s$; if the sequence $(h_0,\ldots,h_s)$ has no internal zeros, then log-concavity is in fact stronger than unimodality. Only a few weeks ago, the same authors constructed an example of a very ample polytope for which the $h^*$-polynomial is not unimodal \cite{hofscheier-kurylenko-nill2}.

In previous years, there was a good deal of activity in this area, and some additional partial progress was achieved towards Conjecture~\ref{conj:stanley}. Notably, Athanasiadis \cite[Theorem~1.3]{athanasiadis} proved that $d$-dimensional lattice polytopes admitting a regular unimodular triangulation have an $h^*$-polynomial whose second half is decreasing, i.e., $h^*_d \leq h^*_{d-1} \leq \cdots \leq h^*_{\lfloor d/2\rfloor}$. The same result, but only under the IDP assumption was proved more recently in \cite[Corollary~2.2]{adiprasito-papadakis-petrotou}.

Our polytopes are constructed as follows. Fix $n$ and $d$, and consider any nonnegative integer matrix $A\in \mathbb{Z}_{\geq 0}^{n\times(d+1)}$. Let us label the rows with numbers from $1$ to $n$ and the columns from $0$ to $d$. Assume that none of the $n$ rows of $A$ is zero. Each column of $A$ gives rise to a rectangular prism; for each $0\leq j\leq d$ consider $D_j$ to be the rectangular prism given by $D_j = \prod_{i=1}^n [0,a_{ij}]$. Since we allow some entries of $A$ to be zero, the dimension of $D_j$ can drop.

We define the polytope $\mathcal{C}(A)$ to be the Cayley sum of the rectangular prisms $D_0,\ldots, D_d$. Recall that the \emph{Cayley sum} of a collection of polytopes $P_0,\ldots, P_d\subseteq \mathbb{R}^n$ is the convex hull of $(P_0\times \{(0,\ldots,0)\})\cup (P_1\times \{e_1\}) \cup\cdots\cup (P_d\times \{e_d\})$ in $\mathbb{R}^{n+d}$, where each $e_j$ stands for the $j$-th canonical vector in $\mathbb{R}^d$ (see, e.g., \cite{haase-nill-payne} for this operation in the context of Ehrhart theory). Since none of the rows of $A$ is identically zero, the dimension of $\mathcal{C}(A)$ is always $n+d$.

We prove that for each matrix $A$ as above, the polytope $\mathcal{C}(A)$ possesses a regular unimodular flag triangulation, and hence has the integer decomposition property. The crucial observation enabling this conclusion is that they are a special class of \emph{Nakajima polytopes} and hence \cite[Corollary~2.9]{haase-paffenholz-piechnik-santos} applies directly. Furthermore, we prove that whenever all the entries of $A$ are strictly positive, then $\mathcal{C}(A)$ is a smooth polytope, and we can even find smooth counterexamples to Conjecture~\ref{conj:stanley}.

To disprove Conjecture~\ref{conj:stanley} we employ the following strategy. 

\begin{enumerate}[\normalfont (i)]

    \item We first prove a general concrete formula for the Ehrhart polynomial of $\mathcal{C}(A)$. Concretely,
    \[ E_{\cC(A)}(k)=
 \sum_{\substack{b_0+\cdots+b_d=k\\b_j\in\Z_{\ge0}}}
       \prod_{i=1}^n\left(1+\sum_{j=0}^d a_{ij}b_j\right). \]

    \item With the preceding formula at hand, the problem becomes that of finding a matrix $A$ for which the $h^*$-polynomial is non-unimodal. For any two positive integers $\alpha$ and $\beta$, consider the matrix $A_{\alpha,\beta}$ given by
 \[
 A_{\alpha,\beta}=[\,\alpha\mathbf1_d\mid\beta I_d\,]
 =\begin{bmatrix}
 \alpha&\beta&0&\cdots&0\\
 \alpha&0&\beta&\cdots&0\\
 \vdots&\vdots&\vdots&\ddots&\vdots\\
 \alpha&0&0&\cdots&\beta
 \end{bmatrix}\in\Z_{\ge0}^{d\times(d+1)},
 \]
where $\mathbf1_d$ is the all-ones column vector and $I_d$ is the identity matrix.

Take $n=d=20$. The polytope $\mathcal{C}(A_{50,800})$ has dimension $40$ and its $h^*$-polynomial has coefficients:
    \begin{align*}
 h_{18}^*&=949^{20}-2879\cdot899^{19}+1\,288\,210\cdot849^{18},\\
 h_{19}^*&=899^{20}-1829\cdot849^{19},\\
 h_{20}^*&=849^{20}.
\end{align*}
These can be computed exactly and satisfy $h^*_{18} > h^*_{19} < h^*_{20}$.
\end{enumerate}

By taking pyramids over the polytope constructed above, or by making other choices of parameters, one can easily construct infinitely many other examples of IDP polytopes of arbitrarily large dimension with a non-unimodal $h^*$-polynomial.

Since our methods come in handy to test other conjectures, it is natural to ask if we can go beyond Stanley's conjecture. In fact, we actually came up with a counterexample to Conjecture~\ref{conj:stanley} by searching for a counterexample to another conjecture posed by Ferroni and Higashitani \cite[Conjecture~1.2]{ferroni-higashitani}. 

\begin{theorem}\label{thm:main-series}
    There exist infinitely many IDP lattice polytopes whose Ehrhart series is not log-concave. 
\end{theorem}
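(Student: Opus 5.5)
The plan is to reuse the family $\mathcal{C}(A)$ from the introduction, with the Ehrhart polynomial formula from step (i) as the computational engine. Every $\mathcal{C}(A)$ is IDP because it has a regular unimodular triangulation, via the Nakajima-polytope argument and \cite[Corollary~2.9]{haase-paffenholz-piechnik-santos}. So it suffices to find infinitely many matrices $A$ for which the sequence $\bigl(E_{\cC(A)}(m)\bigr)_{m\ge0}$ is not log-concave. That means finding some $m\ge1$ with
\[
E_{\cC(A)}(m)^2 < E_{\cC(A)}(m-1)\,E_{\cC(A)}(m+1).
\]

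First, a remark on where to look. The inequality cannot fail at $m=1$ for any IDP polytope. Indeed, $E(0)=1$, and IDP makes the sum map from unordered pairs of lattice points of $P$ onto $2P\cap\Z^N$ surjective, so $E(2)\le \binom{E(1)+1}{2}<E(1)^2$. The failure therefore has to be located at some $m\ge 2$.

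The main step is to choose the matrix $A_{\alpha,\beta}$ from the introduction, or a close variant with $n$ and $d$ as free parameters. I would then study $E(m)=\sum_{b_0+\cdots+b_d=m}\prod_i\bigl(1+\sum_j a_{ij}b_j\bigr)$ in the regime $\alpha,\beta\to\infty$ with the ratio $\beta/\alpha=\lambda$ fixed. In this regime
\[
E(m)=\alpha^n\,F_\lambda(m)+O(\alpha^{n-1})\quad\text{for each fixed } m,
\]
where $F_\lambda(m)=\sum_{b_0+\cdots+b_d=m}\prod_{i=1}^d(b_0+\lambda b_i)$ is an explicit function of $m$. Log-concavity of $E$ at a given $m$ then reduces, to leading order, to log-concavity of $F_\lambda$ at $m$.

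I would search for $n=d$, $\lambda$ and $m$ (probably $m=2$ or $3$, $d$ around $20$, and $\lambda$ large, mimicking the parameters $50,800$ used for Theorem~\ref{thm:main-unimodality}) such that $F_\lambda(m)^2<F_\lambda(m-1)F_\lambda(m+1)$ strictly. The heuristic is that for large $\lambda$, the terms with several $b_i\ge1$ and $b_0$ large produce a sudden jump between consecutive $m$, analogous to the sign pattern that created the dip $h^*_{18}>h^*_{19}<h^*_{20}$. The identity
\[
\sum_m E(m)z^m=\frac{h^*(z)}{(1-z)^{n+d+1}}
\]
together with the three top coefficients displayed in the introduction may be used to guide this choice.

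Once a strict inequality holds for $F_\lambda$, it persists for $E$ for all sufficiently large $\alpha$ with $\beta=\lambda\alpha$, because it is an open condition and both sides carry the common factor $\alpha^{2n}$. This yields infinitely many IDP polytopes, even pairwise non-isomorphic ones since their volumes differ, and all of them are smooth when all entries are positive, if one prefers that. Alternatively, a single verified exact example, such as a concrete $\cC(A_{\alpha,\beta})$ checked by exact evaluation of the formula in (i), can be combined with this perturbation argument.

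The main obstacle is the search step: actually exhibiting $(d,\lambda,m)$ where $F_\lambda$ fails log-concavity, and proving it rigorously. I would first try to obtain closed forms for $F_\lambda(m)$ when $m$ is small by expanding the product over which $b_i$ are nonzero. I would then compare dominant terms as $\lambda\to\infty$, reducing the problem to an inequality between polynomials in $\lambda$ whose leading coefficients can be read off combinatorially. If that comparison is too delicate, I would fall back on exact integer evaluation of the formula in (i) for specific parameters, exactly as is done for the $h^*$-coefficients above.
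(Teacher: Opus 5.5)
Your reduction is sound. The remark excluding $m=1$ is correct, and for fixed integer $\lambda$ one has $E(m)=\alpha^nF_\lambda(m)+O(\alpha^{n-1})$, so a strict failure of log-concavity of $F_\lambda$ does transfer to $\cC(A_{\alpha,\lambda\alpha})$ for all large $\alpha$.

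The gap is that the proposal stops exactly where the content of the theorem lies. You never exhibit $(d,\lambda,m)$ with $F_\lambda(m)^2<F_\lambda(m-1)F_\lambda(m+1)$, and the region you propose to search contains none for large $\lambda$. To see this, take $m\le d-1$. Every composition with $b_0=0$ then has some $b_i=0$ and contributes $0$. A composition with $b_0\ge1$ has $\lambda$-degree equal to the number of $i\ge1$ with $b_i>0$. Hence $F_\lambda(m)=\binom{d}{m-1}\lambda^{m-1}+O(\lambda^{m-2})$ for $1\le m\le d-1$. For $2\le m\le d-2$ the leading coefficients $\binom{d}{m-2},\binom{d}{m-1},\binom{d}{m}$ are strictly log-concave, so with $d\approx20$, $m\in\{2,3\}$ and $\lambda$ large, nothing fails. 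Your heuristic that terms with $b_0$ large produce the jump is also misdirected: as long as $m<d$ forces $b_0\ge1$, the $\lambda$-degree grows by exactly one per step.

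The missing idea is the degree jump at $m=d$, which comes from $b_0=0$. The single composition $(0,1,\ldots,1)$ gives $F_\lambda(d)=\lambda^d+O(\lambda^{d-1})$, two degrees above $F_\lambda(d-1)$. Hence $F_\lambda(d-2)F_\lambda(d)-F_\lambda(d-1)^2=\binom{d}{3}\lambda^{2d-3}+O(\lambda^{2d-4})$, and log-concavity fails at $m=d-1$ for every $d\ge3$ once $\lambda$ is large.

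The smallest case is explicit. For $n=d=3$ one gets $F_\lambda(1)=1$, $F_\lambda(2)=3\lambda+11$ and $F_\lambda(3)=\lambda^3+3\lambda^2+24\lambda+57$. So $F_\lambda(1)F_\lambda(3)-F_\lambda(2)^2=\lambda^3-6\lambda^2-42\lambda-64$, which equals $79$ at $\lambda=11$. Then $\cC(A_{\alpha,11\alpha})$ works for all sufficiently large $\alpha$. This needs $\alpha$ genuinely large: the $\alpha^5$-coefficient of $E(1)E(3)-E(2)^2$ is $-30900$.

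Once this is supplied, your route is a valid alternative to the paper's. The paper instead takes the $6\times4$ matrix $B_{s,s}+J$ with one parameter $s$. It is built so that $E(1),E(2),E(3)$ have $s$-degrees $3$, at most $4$, and $6$, giving $E(1)E(3)-E(2)^2=8s^9+O(s^8)$ directly. That approach needs no auxiliary limit function and gives smooth examples. Your corrected version needs $\lambda$ fixed large first, but yields $6$-dimensional rather than $9$-dimensional examples.
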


In other words, we show the existence of an IDP polytope for which the infinite sequence $E_P(0), E_P(1), E_P(2),\ldots$ is not log-concave. The underlying idea to construct these polytopes is overall the same, but it was the pursuit of these examples which led us to the solution. 

While three years ago, when we posed it, we were convinced that our conjecture was more likely to be true than Conjecture~\ref{conj:stanley}, it was actually easier to disprove. Although the author spent many hours interacting with Codex unsuccessfully in the pursuit of counterexamples to Conjecture~\ref{conj:stanley}, it was only after the efforts were focused on disproving \cite[Conjecture~1.2]{ferroni-higashitani} that we were able to come up with the right class of polytopes. 

Finally, we also disprove a conjecture by Brenti \cite[Conjecture~5.2]{brenti-update} via the following theorem.

\begin{theorem}\label{thm:main-gorenstein}
    There exist Gorenstein IDP polytopes whose $h^*$-polynomial is not log-concave.
\end{theorem}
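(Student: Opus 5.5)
The plan is to stay inside the class of Cayley sums $\mathcal{C}(A)$ of rectangular prisms. By the discussion above, these automatically have regular unimodular flag triangulations via \cite[Corollary~2.9]{haase-paffenholz-piechnik-santos}, and hence are IDP. So there are only two things to do: pin down for which matrices $A$ the polytope $\mathcal{C}(A)$ is Gorenstein, and then choose such an $A$ whose $h^*$-polynomial, computed from the Ehrhart formula in (i), fails log-concavity.

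\emph{Step 1: a Gorenstein criterion.} I would describe the cone over $\mathcal{C}(A)\times\{1\}$ by coordinates $(x,\lambda)\in\mathbb{R}^n\times\mathbb{R}^{d+1}$, where $\lambda_0+\cdots+\lambda_d$ is the height. Its facet inequalities should be
\[
\lambda_j\ge 0,\qquad x_i\ge 0,\qquad \sum_{j=0}^d a_{ij}\lambda_j - x_i\ge 0 .
\]
Each of these is primitive, since all rows are nonzero. We need the facet at $x_i=\sum_j a_{ij}\lambda_j$ to be genuine, which I would check using that the $i$-th row is nonzero. The polytope is Gorenstein exactly when there is a lattice point at lattice distance $1$ from every facet. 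Reading off the three families of equalities gives $\lambda_j=1$ for all $j$, $x_i=1$ for all $i$, and $\sum_j a_{ij}=2$ for all $i$. So I expect the following clean statement: \emph{$\mathcal{C}(A)$ is Gorenstein (of index $d+1$) if and only if every row of $A$ sums to $2$.} This is consistent with the Batyrev--Nill criterion: the Minkowski sum of the $D_j$ is the box $\prod_i[0,\sum_j a_{ij}]$, which is reflexive after translation exactly when all its sides have length $2$.

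\emph{Step 2: choose the matrix.} Every row is now of type $2e_j$ or $e_j+e_{j'}$, so $A$ is encoded by a multigraph with loops on the vertex set $\{0,\ldots,d\}$. Each loop at $j$ contributes a factor $(1+2b_j)$ to the product in (i), and each edge $jj'$ contributes $(1+b_j+b_{j'})$. I would first try the simplest families. One is $d=1$ with $p$ loops at $0$, $q$ loops at $1$ and $r$ edges $01$, giving
\[
E_{\mathcal{C}(A)}(k)=\sum_{b_0+b_1=k}(1+2b_0)^p(1+2b_1)^q(1+k)^r .
\]
The other is a star-like family analogous to $A_{\alpha,\beta}$. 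In each case I would extract $h^*$ through the standard transfer $\sum_k E(k)z^k=h^*(z)/(1-z)^{n+d+1}$, which should be palindromic of degree $n$. The goal is a closed form in the spirit of the displayed $h^*_{18},h^*_{19},h^*_{20}$ for the middle coefficients.

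\emph{Step 3: certify failure of log-concavity.} Once a closed form is available, I would exhibit one index $i$ with $(h^*_i)^2<h^*_{i-1}h^*_{i+1}$. My intended mechanism mimics the main example: a very unbalanced choice of multiplicities (for instance $r$ large compared with $p,q$, or the reverse) should produce a dip or a near-plateau near the centre or the ends of the palindromic sequence. I would then confirm the inequality by exact integer arithmetic.

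The main obstacle is Step 2. Palindromicity and unimodality, which holds for Gorenstein IDP polytopes by \cite{adiprasito-papadakis-petrotou-steinmeyer,adiprasito-papadakis-petrotou}, push strongly toward log-concavity, so small parameters will probably all yield log-concave sequences. My first attempt would be a computer search over multigraph multiplicities for small $d$, guided by the formula from (i). After that, I would hand-verify a single explicit instance by an exact computation. Taking pyramids, or free joins with a standard simplex, would then give infinitely many examples if desired.
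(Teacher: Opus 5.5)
Your Step~1 in the ``if'' direction, which is all you need, and your appeal to Haase--Paffenholz--Piechnik--Santos for the IDP agree with what the paper does. The ``only if'' half of your criterion is false as stated. For the $1\times 2$ matrix with row $(1,0)$, the polytope $\mathcal{C}(A)$ is the standard unimodular triangle, which is Gorenstein of index $3$ even though its row sum is $1$. Your derivation tacitly assumes that every inequality $\lambda_j\ge 0$ defines a facet, and this fails as soon as some row is supported on the single column $j$ (here $\lambda_0\ge 0$ is redundant). This is harmless, since you never use that direction.

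\textbf{The genuine gap.} The theorem is an existence statement, and its entire content is an explicit polytope together with an exact certificate. Your proposal never produces one. Step~2 is deferred to an unspecified computer search, and Step~3 to a mechanism you only hope will appear. As written, there is no proof.

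\textbf{Your search directions are partly dead ends.}

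\emph{The $d=1$ family can never work.} There the edge rows contribute the constant factor $(1+k)^r$, and
$\sum_k\sum_{b_0+b_1=k}(1+2b_0)^p(1+2b_1)^q z^k=B_p(z)B_q(z)/(1-z)^{p+q+2}$,
where $B_p,B_q$ are the real-rooted type~B Eulerian polynomials. Multiplying the counting function by $(1+k)^r$ preserves real-rootedness of the numerator, by Wagner's theorem on Hadamard products. So every $h^*$-polynomial in this family is real-rooted, hence log-concave.

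\emph{The row-sum-two analogue of $A_{\alpha,\beta}$ is also unpromising,} at least at the start of the sequence. This is the family with rows $e_0+e_i$ only. With each row repeated $m$ times, the dominant terms of $E(1),E(2),E(3)$ are $2^{dm},3^{dm},4^{dm}$, and $2^{dm}4^{dm}<(3^{dm})^2$.

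\emph{You do not need closed forms for middle coefficients.} The paper's violation sits at $h^*_1,h^*_2,h^*_3$, which are determined by $E(1),E(2),E(3)$ alone. These are tiny finite sums.

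\textbf{The missing idea} is to combine a star with loops at its leaves and to use large row multiplicity. Take rows $e_0+e_i$ and $2e_i$ for $i=1,2,3$, each repeated $m$ times. Then
$E_m(1)=8^m+3\cdot 6^m$, $E_m(2)=27^m+6\cdot 36^m+3\cdot 15^m$, and $E_m(3)=216^m+3\cdot 162^m+\cdots$.
Since $8\cdot 216=1728>1296=36^2$, the dominant exponentials force $h^*_1h^*_3>(h^*_2)^2$ for all sufficiently large $m$. The binomial corrections in passing from $E$ to $h^*$ are of lower order. The paper takes $m=12$, a $75$-dimensional example, where the subdominant terms $3\cdot 6^m$ and $3\cdot 162^m$ also help, and it verifies the inequality by exact integer arithmetic.
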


In a sense, this shows that the results by Adiprasito, Papadakis, Petrotou, and Steinmeyer~\cite{adiprasito-papadakis-petrotou-steinmeyer} and Adiprasito, Papadakis, and Petrotou~\cite{adiprasito-papadakis-petrotou} are in a sense ``the best possible'': they cannot be upgraded to log-concavity, nor can the Gorensteinness assumption been dropped if one hopes for unimodality.
The same Gorenstein examples also provide an affirmative answer to \cite[Question~5.12(b)]{ferroni-higashitani}. That is, there exists a very ample polytope (actually far nicer, Gorenstein and having regular unimodular flag triangulations) whose interior Ehrhart series is not log-concave, i.e., the infinite sequence $|E_P(-1)|$, $|E_P(-2)|$, $\ldots$ is not log-concave.

Since the Gorenstein members of our class of IDP polytopes have regular unimodular flag triangulations, it is natural to inquire if they disprove the analogue of Gal's conjecture for lattice polytopes. In other words, it is sensible to search for an example of a Gorenstein polytope having a regular unimodular flag triangulation and whose $h^*$-polynomial fails to be $\gamma$-positive (see \cite[Conjecture~3.28]{ferroni-higashitani}). We have not been able to find one, and in a forthcoming iteration of this manuscript we will actually include a proof that there are no counterexamples belonging to this class.

\subsection*{Outline}
In Section~\ref{sec:prisms} we describe the geometry of $\mathcal C(A)$,
including its vertices, integer decomposition property, and regular
unimodular flag triangulations. We also give sufficient conditions for
smoothness and Gorensteinness. Section~\ref{sec:enumeration} establishes
the counting formulas for the Ehrhart polynomial, its numerator, and
interior lattice points. We then specialize the matrix $A$: in
Section~\ref{sec:nonunimodality} to obtain nonunimodal $h^*$-polynomials,
in Section~\ref{sec:series} to disprove log-concavity of Ehrhart series,
and in Section~\ref{sec:interior} to obtain the Gorenstein and interior-series
counterexamples.

\section{Cayley sums of rectangular prisms}\label{sec:prisms}

Fix integers $n\ge1$ and $d\ge0$, and let
$A=(a_{ij})\in\Z_{\ge0}^{n\times(d+1)}$ have no zero row.
For $0\le j\le d$, put
\[
 D_j=\prod_{i=1}^n[0,a_{ij}],\qquad
 v_0=0,\quad v_j=e_j\ (j\ge1).
\]
As in the introduction, define $\cC(A)$ to be the Cayley sum of the
rectangular prisms $D_0,\ldots,D_d$:
\[
 \cC(A)=\conv\left(\bigcup_{j=0}^d D_j\times\{v_j\}\right)
 \subseteq\R^{n+d}.
\]
A zero entry in $A$ allows some of these prisms to drop dimension.

Let $\Delta_d=\conv(v_0,\ldots,v_d)$, with barycentric coordinates
$\lambda_0(y)=1-\sum_{j=1}^d y_j$ and $\lambda_j(y)=y_j$ for
$1\le j\le d$. We first record the dimension and vertices of
$\cC(A)$, together with the inequality description that will be used
in the proofs below.

\begin{proposition}\label{prop:prisms}
The polytope $\cC(A)$ is an $(n+d)$-dimensional lattice polytope, and
\begin{equation}\label{eq:general-prism}
 \cC(A)=\left\{(x,y)\in\R^n\times\Delta_d:
 0\le x_i\le\sum_{j=0}^d a_{ij}\lambda_j(y)
 \quad(1\le i\le n)\right\}.
\end{equation}
Its vertices are precisely
\[
 \bigcup_{j=0}^d
 \left\{(x,v_j):x_i\in\{0,a_{ij}\}\text{ for }1\le i\le n\right\},
\]
where repeated coordinate choices are counted only once. In particular,
if $s_j=\#\{i:a_{ij}>0\}$, then the number of vertices of $\mathcal{C}(A)$ is $\sum_{j=0}^d 2^{s_j}$.
\end{proposition}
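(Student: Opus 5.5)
Let $Q$ denote the right-hand side of \eqref{eq:general-prism}. The plan is to prove $\cC(A)=Q$ by double inclusion, and then read off dimension and vertices from $Q$.

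For $\cC(A)\subseteq Q$, note that $Q$ is convex: each constraint $x_i\le\sum_j a_{ij}\lambda_j(y)$ is linear in $(x,y)$ because the $\lambda_j$ are affine, and $\Delta_d$ is convex. Each point $(x,v_j)$ with $x\in D_j$ lies in $Q$, since $\lambda_k(v_j)=\delta_{jk}$ and so the constraint reads $0\le x_i\le a_{ij}$.

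For $Q\subseteq\cC(A)$, take $(x,y)\in Q$ with $\lambda=\lambda(y)$ and $c_i=\sum_j a_{ij}\lambda_j$.
\begin{itemize}
\item If $c_i>0$, set $t_i=x_i/c_i\in[0,1]$ and $x^{(j)}_i=t_ia_{ij}\in[0,a_{ij}]$.
\item If $c_i=0$, then $x_i=0$, and we set $x^{(j)}_i=0$.
\end{itemize}
Then $x^{(j)}\in D_j$, $\sum_j\lambda_jx^{(j)}=x$ and $\sum_j\lambda_jv_j=y$. Hence $(x,y)=\sum_j\lambda_j(x^{(j)},v_j)$ is a convex combination of points of $\cC(A)$.

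Integrality is immediate, because $\cC(A)$ is the convex hull of the finitely many lattice points $(x,v_j)$ with $x_i\in\{0,a_{ij}\}$. Each $D_j$ is the convex hull of its vertices, so these points already generate $\cC(A)$.

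For the dimension, the following $n+d+1$ points lie in $\cC(A)$ and are affinely independent:
\begin{itemize}
\item the points $(0,v_j)$ for $0\le j\le d$;
\item for each row $i$, a point $(a_{ij(i)}e_i,v_{j(i)})$, where $j(i)$ is a column with $a_{ij(i)}>0$. Such a column exists because row $i$ is nonzero.
\end{itemize}
Subtracting $(0,v_{j(i)})$ from the second kind of point gives $a_{ij(i)}e_i$. So the differences span $\R^n\times\R^d$.

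For the vertices, the generating set above contains every vertex, so it suffices to show that each $p=(x,v_j)$ with $x_i\in\{0,a_{ij}\}$ is a vertex. Take the linear functional $\phi(x,y)=M\,y$-part $+\sum_i\sigma_ix_i$. Here $\sigma_i=+1$ if $x_i=a_{ij}>0$ and $\sigma_i=-1$ otherwise. The $y$-part is an affine function on $\Delta_d$ maximized uniquely at $v_j$, for example $M\lambda_j(y)$ with $M$ large. Then:
\begin{itemize}
\item On the face $y=v_j$, the functional $\phi$ is uniquely maximized at $p$ over $D_j\times\{v_j\}$. Coordinates with $a_{ij}=0$ are forced to be $0$ there, so they cause no ambiguity.
\item On the other generators, $\phi$ drops by at least $M$, minus the bounded contribution $\sum_i a_{ik}$.
\end{itemize}
So $p$ is the unique maximizer, hence a vertex. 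Distinct generators are distinct points except for the repetitions of $x_i=0=a_{ij}$. The vertex count is therefore $\sum_j 2^{s_j}$, the $y$-coordinate separating different $j$.

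The main obstacle is the reverse inclusion $Q\subseteq\cC(A)$, specifically choosing the decomposition $x^{(j)}$ correctly and handling the degenerate case $c_i=0$. The scaling by the common factor $t_i$ resolves it.
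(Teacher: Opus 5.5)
Your proof is correct and follows essentially the same route as the paper. Your double inclusion with the common scaling factor $t_i$ explicitly verifies the paper's remark that the fiber of $\cC(A)$ over $y\in\Delta_d$ is the box $\sum_j\lambda_j(y)D_j=\prod_i[0,\sum_j a_{ij}\lambda_j(y)]$. Your functional $M\lambda_j(y)+\sum_i\sigma_ix_i$ concretely witnesses the paper's argument that $D_j\times\{v_j\}$ is a face (the preimage of the vertex $v_j$ of $\Delta_d$), so its vertices are vertices of $\cC(A)$. The only difference is cosmetic: you get the dimension from $n+d+1$ affinely independent points rather than from full-dimensional fibers over the interior of $\Delta_d$.
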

\begin{proof}
The Cayley sum is the convex hull of lattice points, so it is a lattice
polytope. For fixed $y\in\Delta_d$, its fiber under projection onto
the second factor is the Minkowski sum $\sum_j\lambda_j(y)D_j$.
Since the $D_j$ are coordinate prisms, this fiber is
\[
 \prod_{i=1}^n\left[0,\sum_{j=0}^d a_{ij}\lambda_j(y)\right],
\]
which proves \eqref{eq:general-prism}. When $y$ lies in the interior
of $\Delta_d$, all fiber lengths are positive, since no row is zero.
Thus the dimension is $n+d$.

Every layer $D_j\times\{v_j\}$ is a face: it is the inverse image of
a vertex of the base simplex under projection. Its vertices are
therefore vertices of $\cC(A)$. Conversely, the Cayley sum is generated
by these vertices, so there are no others. The $j$th prism has $2^{s_j}$
vertices, and distinct layers are disjoint. This proves the count of vertices.
\end{proof}

\begin{proposition}\label{prop:triangulations}
Every polytope $\cC(A)$ admits a regular unimodular flag triangulation.
In particular, it has the integer decomposition property.
\end{proposition}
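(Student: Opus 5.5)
The plan is to exhibit $\cC(A)$ as a Nakajima polytope and then invoke \cite[Corollary~2.9]{haase-paffenholz-piechnik-santos}. That corollary says every Nakajima polytope admits a regular unimodular flag triangulation. Recall that Nakajima polytopes are defined recursively. A point is one; and if $Q\subseteq\R^{m-1}$ is Nakajima and $\ell$ is an affine function, nonnegative on $Q$ and integral on lattice points, then
\[
\{(x,t): x\in Q,\ 0\le t\le \ell(x)\}
\]
is again Nakajima, possibly after a unimodular change of coordinates. The unit simplex $\Delta_d$ is Nakajima: starting from a point, one repeatedly adds a coordinate $t$ with $0\le t\le 1-\sum(\text{previous coordinates})$. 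This uses the lattice-integral affine functions $1-y_1-\cdots-y_{k}$, which are nonnegative on the partial simplices.

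Next, I will build $\cC(A)$ from $\Delta_d$ by adding the coordinates $x_1,\ldots,x_n$ one at a time. By \eqref{eq:general-prism} of Proposition~\ref{prop:prisms}, each $x_i$ is constrained by
\[
0\le x_i\le \ell_i(y):=\sum_{j=0}^d a_{ij}\lambda_j(y)=a_{i0}+\sum_{j=1}^d (a_{ij}-a_{i0})y_j .
\]
The bound $\ell_i$ is an affine function of $y$ only, with integer coefficients, so it is integral on $\Z^d$. It is nonnegative on $\Delta_d$ because it is a nonnegative combination of the barycentric coordinates, since $a_{ij}\ge0$. It does not involve the earlier $x_k$, so it is nonnegative on the intermediate polytope
\[
\{(x_1,\ldots,x_{i-1},y): y\in\Delta_d,\ 0\le x_k\le \ell_k(y)\}.
\]
Induction on $i$ then shows that each intermediate polytope is Nakajima, and at $i=n$ we obtain exactly $\cC(A)$. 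The coordinate order must be reversed to match the convention that the new coordinate is the last one; this is a permutation of coordinates, which is a lattice isomorphism and so harmless.

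Finally, Corollary~2.9 of \cite{haase-paffenholz-piechnik-santos} gives a regular unimodular flag triangulation. IDP follows from the standard fact that a unimodular triangulation implies IDP. Concretely, a lattice point of $kP$ lies in $k\sigma$ for some unimodular simplex $\sigma$, and $k\sigma$ is IDP.

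The main obstacle is bookkeeping rather than mathematics. I must check that the definition of Nakajima polytope in \cite{haase-paffenholz-piechnik-santos} matches the form above: lower bound $0$, upper bound given by an integral affine function that is nonnegative on the previous polytope. I must also confirm that it allows $\ell_i$ to vanish on faces, which it does here when some $a_{ij}=0$, so that $D_j$ drops dimension; the definition does allow degenerate fibers. If their definition instead uses products of base polytopes with intervals and integral "lifting functions," I would adjust the description accordingly. The inequality description from Proposition~\ref{prop:prisms} is the only geometric input needed.
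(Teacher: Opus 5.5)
Your proposal is correct and matches the paper's proof: the paper also starts from $\Delta_d$ (itself obtained from a point by chimneys) and adjoins $x_1,\ldots,x_n$ as successive integral chimneys with upper functions $f_i(y)=\sum_j a_{ij}\lambda_j(y)$, explicitly allowing vanishing fiber lengths. It likewise invokes \cite[Theorem~2.8 and Corollary~2.9]{haase-paffenholz-piechnik-santos} for the regular unimodular flag triangulation, and deduces IDP from unimodularity.
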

\begin{proof}
Following \cite{haase-paffenholz-piechnik-santos}, an \emph{integral chimney} over a lattice polytope $P$ is
\[
 \operatorname{Chim}(P,0,f)=\{(z,t):z\in P,\ 0\le t\le f(z)\},
\]
where $f$ is an integral affine function nonnegative on $P$.
Thanks to a result by Haase, Paffenholz, Piechnik, and Santos \cite[Theorem~2.8]{haase-paffenholz-piechnik-santos}, if $P$ has a regular unimodular flag triangulation, then so does $\operatorname{Chim}(P,0,f)$.

Start with $\Delta_d$, whose trivial triangulation is regular,
unimodular, and flag. Adjoin $x_1,\ldots,x_n$ successively, using
the upper functions $f_i(y)=\sum_j a_{ij}\lambda_j(y)$.
They are integral affine and nonnegative throughout the preceding
polytope, so the theorem applies at every step. Vanishing fiber lengths
on boundary faces are allowed. Equivalently, $\cC(A)$ is a \emph{Nakajima
polytope}: since $\Delta_d$ itself is obtained from a point by successive
chimneys; see also
\cite[Corollary~2.9]{haase-paffenholz-piechnik-santos}.
The integer decomposition property follows from the existence of a
unimodular triangulation.
\end{proof}

A full-dimensional lattice polytope is \emph{smooth} if it is simple
and the primitive edge directions at every vertex form a lattice basis.

\begin{proposition}\label{prop:smooth}
If every entry of $A$ is positive, then $\cC(A)$ is smooth.
\end{proposition}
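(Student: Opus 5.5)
We use the inequality description \eqref{eq:general-prism} and verify smoothness at each vertex directly. When all $a_{ij}>0$, $\cC(A)$ is cut out by $n+d+1$ halfspaces:
\[
 \lambda_j(y)\ge 0\ (0\le j\le d),\qquad x_i\ge 0,\qquad \sum_j a_{ij}\lambda_j(y)-x_i\ge 0\ (1\le i\le n).
\]
By Proposition~\ref{prop:prisms}, every vertex has the form $(x,v_j)$ with $x_i\in\{0,a_{ij}\}$. At $y=v_j$ we have $\lambda_k=0$ for all $k\ne j$, so exactly $d$ simplex inequalities are tight. For each $i$, exactly one of $x_i\ge0$ or $x_i\le a_{ij}$ is tight, since $a_{ij}>0$. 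That gives $n+d$ tight inequalities in dimension $n+d$.

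We then check that these tight normals are linearly independent. The normals $\lambda_k$ ($k\neq j$) involve only $y$, and they are independent on $\R^d$. The remaining $n$ normals each contain $\pm e_{x_i}$, so the normal matrix is block triangular with invertible diagonal blocks. Hence every vertex is simple.

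Next we compute the edge directions at $V=(x,v_j)$ by relaxing one tight inequality at a time.

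\emph{Relaxing an $x_i$ constraint.} Keep $y=v_j$ and change $x_i$ alone. The direction is $\pm e_{x_i}$, and the edge runs to the other vertex of the prism $D_j$.

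\emph{Relaxing $\lambda_k=0$ for some $k\neq j$.} Move along the edge of $\Delta_d$ from $v_j$ toward $v_k$, with $y=v_j+t(v_k-v_j)$. Each $x_i$ must keep its tight constraint, so $x_i=0$ if it was at the lower bound, and $x_i=(1-t)a_{ij}+t a_{ik}$ if it was at the upper bound. The direction is therefore $(w,\,v_k-v_j)$, where $w_i=0$ or $w_i=a_{ik}-a_{ij}$. This is integral and primitive, because its $y$-part $v_k-v_j$ is primitive. The edge ends at the lattice vertex $(x',v_k)$.

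Finally we show these $n+d$ primitive directions form a lattice basis. Order the coordinates with the $x$-block first. The directions $e_{x_i}$ (up to sign) together with $(w^{(k)},v_k-v_j)$, $k\ne j$, form a block upper-triangular matrix. Its $x$-block is $\pm I_n$. Its $y$-block consists of the vectors $v_k-v_j$, $k\ne j$, which form a lattice basis of $\Z^d$ since $\Delta_d$ is unimodular. The determinant is therefore $\pm1$.

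The main point requiring care is the second type of edge. Positivity of all entries is essential there: it guarantees that at every layer $D_j$ is a full-dimensional box. Consequently exactly one constraint per $x_i$ is tight, and that tightness persists along the simplex edge. If some $a_{ij}=0$, then both $x_i\ge 0$ and $x_i\le a_{ij}$ would be tight at the vertex, giving more than $n+d$ tight inequalities, and simplicity would fail. The positivity of $a_{ik}$ at the other endpoint likewise ensures the edge really ends at a vertex rather than breaking at a degenerate face.
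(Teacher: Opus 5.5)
Your proof is correct and follows the paper's argument step for step, with more detail: the $n+d$ tight inequalities at $(x,v_j)$ have block-triangular normals, which gives simplicity; the edges have primitive directions $(\pm e_i,0)$ and $(w,v_k-v_j)$; and the resulting block-triangular matrix with diagonal blocks $\pm I_n$ and $(v_k-v_j)_{k\neq j}$ is unimodular. Two side slips, neither used in the proof: the description has $2n+d+1$ inequalities, not $n+d+1$; and your closing claim that a zero entry makes simplicity fail is not true in general (e.g.\ $n=d=1$ with $a_{10}=0<a_{11}$ gives a triangle, which is simple even though three inequalities are tight at $(0,0)$).
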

\begin{proof}
At a vertex $(x,v_j)$, precisely $d$ base inequalities and one of the
two bounds for each fiber coordinate are tight. Positivity ensures
that the two bounds never coincide. These $n+d$ inequalities have
linearly independent normals, so the vertex is simple. The $n$ edges
in its layer have primitive directions $(\pm e_i,0)$. Along each of
the other $d$ edges, the chosen lower or upper fiber bound stays tight,
while the base moves from $v_j$ towards another simplex vertex $v_\ell$.
The corresponding edge direction has integral fiber coordinates and
base component $v_\ell-v_j$, hence is primitive. The $d$ vectors
$v_\ell-v_j$ form a basis of $\Z^d$. Together with the fiber directions,
these give a block triangular matrix with unimodular diagonal blocks,
and therefore a basis of $\Z^{n+d}$.
\end{proof}

Recall that the \emph{codegree} of a lattice polytope is the smallest
positive integer $k$ for which $kP$ contains an interior lattice point.
By Ehrhart reciprocity, $|E_P(-k)|$ is the number of interior
lattice points of $kP$ for every positive integer $k$.
A polytope is \emph{Gorenstein of index $k$} if a lattice translate
of $kP$ is reflexive. Let us denote by $P^{\operatorname{relint}}$ the relative interior of a polytope $P$.

\begin{proposition}\label{prop:row-two}
Suppose every row of $A$ sums to two. Then $\cC(A)$ is Gorenstein
of index $d+1$. For every $k\ge0$,
\begin{equation}\label{eq:general-translation}
 ((k+d+1)\cC(A)^{\operatorname{relint}})\cap\Z^{n+d}
 =\mathbf1+\bigl(k\cC(A)\cap\Z^{n+d}\bigr).
\end{equation}
In particular, $|E_{\cC(A)}(-k-d-1)|=E_{\cC(A)}(k)$, by Ehrhart reciprocity.
\end{proposition}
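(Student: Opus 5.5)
We work directly from the inequality description \eqref{eq:general-prism} of Proposition~\ref{prop:prisms}. Dilating by $m$ gives
\[
 m\cC(A)=\Bigl\{(x,y):\ y_j\ge0,\ \textstyle\sum_{j\ge1}y_j\le m,\ 0\le x_i\le a_{i0}\bigl(m-\sum_{j\ge1} y_j\bigr)+\sum_{j\ge1}a_{ij}y_j\Bigr\}.
\]
Write $\mu_0=m-\sum_{j\ge1}y_j$ and $\mu_j=y_j$ for $j\ge1$, so that $\sum_j\mu_j=m$. The facet inequalities are $\mu_j\ge0$ for $0\le j\le d$, together with $x_i\ge0$ and $\sum_j a_{ij}\mu_j-x_i\ge0$ for $1\le i\le n$. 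Each left-hand side is an integral affine function, and each is primitive on the lattice.

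We first check that \eqref{eq:general-prism} is irredundant, or at least that it suffices for describing interiors. Since the polytope is full-dimensional, the relative interior is cut out by the strict versions of all these inequalities. For lattice points, a strict integral inequality $\ell>0$ is equivalent to $\ell\ge1$. So a lattice point $(x,y)$ lies in the interior of $(k+d+1)\cC(A)$ if and only if $\mu_j\ge1$ for all $j$, $x_i\ge1$, and $\sum_j a_{ij}\mu_j - x_i\ge1$.

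Now substitute $x=\mathbf1+x'$ and $y=\mathbf1+y'$, interpreting $\mathbf1\in\Z^{n+d}$ as the all-ones vector. Then $\mu_j=1+\mu'_j$ for $j\ge1$. For $j=0$ we get $\mu_0=(k+d+1)-d-\sum_{j\ge1}y'_j=1+\mu'_0$, where $\mu'_0=k-\sum_{j\ge1} y'_j$ is computed for level $k$. The conditions become $\mu'_j\ge0$ and $x'_i\ge0$. The key computation is the upper fiber bound:
\[
 \sum_j a_{ij}(1+\mu'_j)-1-x'_i=\Bigl(\sum_j a_{ij}\Bigr)-2+\sum_j a_{ij}\mu'_j-x'_i\ge 0 .
\]
This holds because the row sum equals two, so the constant term vanishes. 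These are exactly the inequalities describing $k\cC(A)\cap\Z^{n+d}$, which proves \eqref{eq:general-translation}, including the case $k=0$. The reciprocity statement $|E_{\cC(A)}(-k-d-1)|=E_{\cC(A)}(k)$ then follows by counting both sides.

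For the Gorenstein claim, the same substitution applied to real points shows that $(d+1)\cC(A)-\mathbf1$ is described by $\ell\ge-1$ for every primitive facet functional $\ell$, vanishing at the origin. Equivalently, the translate has the unique interior lattice point $0$, and every facet lies at lattice distance one from it. This is the definition of reflexivity, so $\cC(A)$ is Gorenstein of index $d+1$.

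The index is also minimal. The interior condition $\mu_j\ge1$ for all $d+1$ indices forces $m\ge d+1$, so the codegree is $d+1$.

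The main subtle point is ensuring that the listed inequalities are genuinely the facet-defining ones with primitive integral normals, so that the lattice-distance argument is valid. If some inequalities are redundant, the argument needs care: redundant ones cannot cut the interior further, but we must make sure the facet normals used are primitive. This is clear, since each functional has a coefficient $\pm1$ in some coordinate, namely the coordinate $x_i$ or $y_j$.

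There is also the issue of redundancy when some $a_{ij}=0$. For example, $x_i\ge0$ and the upper bound might coincide on a face. But the polytope is full-dimensional, so its interior is still the strict version of every valid inequality in the list, and the argument above is unaffected.
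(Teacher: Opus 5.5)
Your proof is correct and follows the paper's argument. You characterize interior lattice points by the $\ge 1$ versions of the barycentric and fiber inequalities, shift every coordinate by one so that the row-sum-two hypothesis cancels the constant term, and read off reflexivity of $(d+1)\cC(A)-\mathbf 1$ from the primitive facet functionals being $\ge -1$. One small slip: in your key display the left-hand side should carry an extra $-1$, since you are rewriting the condition $\sum_j a_{ij}(1+\mu'_j)-1-x'_i\ge 1$ as $\bigl(\sum_j a_{ij}\bigr)-2+\sum_j a_{ij}\mu'_j-x'_i\ge 0$; this does not affect the conclusion.
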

\begin{proof}
An interior lattice point is specified by positive integers
$b_0,\ldots,b_d$ whose sum is $k+d+1$, together with fiber coordinates
satisfying
\[
 1\le x_i\le\sum_j a_{ij}b_j-1.
\]
Subtract one from every $b_j$ and every $x_i$. Since the rows sum to
two, the resulting inequalities are precisely those of $k\cC(A)$.
This gives \eqref{eq:general-translation}. There are no interior
lattice points before dilation $d+1$, since the $d+1$ positive integral
barycentric coordinates must sum to the dilation parameter. At dilation
$d+1$, every barycentric and fiber coordinate equals one.

After translating $(d+1)\cC(A)$ by this point, its fiber inequalities
have constant term one and primitive integral normals: their
coefficients on the corresponding fiber coordinate are $\pm1$.
The nonredundant base inequalities also have primitive integral normals
and constant term one. Thus every facet is at lattice distance one
from the origin, which proves reflexivity.
\end{proof}

\section{An Ehrhart polynomial computation}\label{sec:enumeration}

We now derive formulas that apply to every matrix $A$ in
\Cref{sec:prisms}. Let
\[
 F_A(u_0,\ldots,u_d)=\prod_{i=1}^n
            \left(1+\sum_{j=0}^d a_{ij}u_j\right).
\]
For a multi-index $\nu\in\Z_{\ge0}^{d+1}$, put
$|\nu|=\sum_j\nu_j$ and
$\binom{u}{\nu}=\prod_j\binom{u_j}{\nu_j}$.
Expand $F_A$ in the multivariate binomial basis:
\begin{equation}\label{eq:newton-coefficients}
 F_A(u)=\sum_{|\nu|\le n}c_\nu(A)\binom{u}{\nu},
 \qquad
 c_i(A)=\sum_{|\nu|=i}c_\nu(A)\quad(0\le i\le n).
\end{equation}
These coefficients can be obtained by forward differences. Explicitly,
\[
 c_\nu(A)=\sum_{0\le\mu\le\nu}
       (-1)^{|\nu|-|\mu|}\binom{\nu}{\mu}F_A(\mu).
\]

\begin{theorem}\label{thm:general-ehrhart}
For every nonnegative integer $k$,
\begin{equation}\label{eq:general-count}
 E_{\cC(A)}(k)=
 \sum_{\substack{b_0+\cdots+b_d=k\\b_j\in\Z_{\ge0}}}
       \prod_{i=1}^n\left(1+\sum_{j=0}^d a_{ij}b_j\right).
\end{equation}
Its $h^*$-polynomial is
\begin{equation}\label{eq:general-numerator}
 h^*_{\cC(A)}(z)=\sum_{i=0}^n c_i(A)z^i(1-z)^{n-i}.
\end{equation}
\end{theorem}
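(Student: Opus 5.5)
The count \eqref{eq:general-count} follows by slicing $k\cC(A)$ along the base simplex, using the inequality description of \Cref{prop:prisms}. A lattice point of $k\cC(A)$ is a pair $(x,y)$ with $y\in k\Delta_d\cap\Z^d$ and $0\le x_i\le\sum_j a_{ij}\,k\lambda_j(y)$. Since the $v_j$ form an affine lattice basis, lattice points $y\in k\Delta_d$ correspond bijectively to tuples $b=(b_0,\ldots,b_d)\in\Z_{\ge0}^{d+1}$ with $b_0+\cdots+b_d=k$, via $b_j=k\lambda_j(y)$. For each such $b$, the fiber is the integral box $\prod_i[0,\sum_j a_{ij}b_j]$. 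This box contains $\prod_i(1+\sum_j a_{ij}b_j)$ lattice points. Summing over $b$ gives \eqref{eq:general-count}, and the right-hand side is $\sum_{|b|=k}F_A(b)$. The case $k=0$ is consistent, since both sides equal $1$.

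For \eqref{eq:general-numerator}, I would compute the Ehrhart series directly. The forward-difference formula makes $F_A(u)=\sum_\nu c_\nu(A)\binom{u}{\nu}$ an identity of polynomials, and the $\binom{u}{\nu}$ form a basis. Each $c_\nu$ is well defined and vanishes unless $|\nu|\le n$, because $F_A$ has total degree at most $n$. Then
\[
\sum_{k\ge0}E_{\cC(A)}(k)z^k=\sum_{\nu}c_\nu(A)\prod_{j=0}^d\Bigl(\sum_{b_j\ge0}\binom{b_j}{\nu_j}z^{b_j}\Bigr)
=\sum_\nu c_\nu(A)\prod_{j=0}^d\frac{z^{\nu_j}}{(1-z)^{\nu_j+1}},
\]
using the standard identity $\sum_{m\ge0}\binom{m}{r}z^m=z^r/(1-z)^{r+1}$. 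The product equals $z^{|\nu|}/(1-z)^{|\nu|+d+1}$. Grouping terms by $i=|\nu|$ gives $\sum_{i=0}^n c_i(A)z^i/(1-z)^{i+d+1}$. Putting everything over the common denominator $(1-z)^{n+d+1}$ yields the numerator $\sum_i c_i(A)z^i(1-z)^{n-i}$.

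The one point needing care is that this numerator is indeed $h^*$. The dimension is $n+d$ by \Cref{prop:prisms}, so the series must have the form $h^*(z)/(1-z)^{n+d+1}$ with $\deg h^*\le n+d$. Our numerator has degree at most $n\le n+d$. Since $E_{\cC(A)}$ is a polynomial and the representation $h(z)/(1-z)^{\dim+1}$ with $\deg h\le\dim$ is unique, the two numerators coincide.

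I expect no serious obstacle. The only subtleties are the bijection between lattice points of $k\Delta_d$ and compositions $b$, and the bookkeeping of the exponent of $(1-z)$.
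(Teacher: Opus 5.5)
Your proposal is correct and follows essentially the same route as the paper: you slice $k\cC(A)$ over the lattice points of $k\Delta_d$ to get the box counts, then expand $F_A$ in the binomial basis and use $\sum_{m\ge0}\binom{m}{r}z^m=z^r/(1-z)^{r+1}$. The paper packages your product over $j$ as the convolution identity $\sum_{|b|=k}\prod_j\binom{b_j}{\nu_j}=\binom{k+d}{d+|\nu|}$, which is the same computation. The only blemish is notational: where you write $k\lambda_j(y)$ for $y\in k\Delta_d$ you mean $k\lambda_j(y/k)$, i.e.\ $b_0=k-\sum_{j\ge1}y_j$ and $b_j=y_j$.
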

\begin{proof}
Fix the integer base coordinates $b_j=y_j$ for $j\ge1$, and put
$b_0=k-\sum_j y_j$. For a fixed such vector, the $i$th fiber coordinate
has $1+\sum_j a_{ij}b_j$ choices. Their independence proves
\eqref{eq:general-count}.

Substituting \eqref{eq:newton-coefficients} into this sum and using that:
\[
 \sum_{\substack{b_0+\cdots+b_d=k\\b_j\ge0}}
             \prod_j\binom{b_j}{\nu_j}
 =\binom{k+d}{d+|\nu|}.
\]
Indeed, the generating function of the left-hand side is
$z^{|\nu|}/(1-z)^{d+1+|\nu|}$. Thus, after grouping terms with
equal $|\nu|$, the Ehrhart series becomes
\[
 \sum_{k\ge0}E_{\cC(A)}(k)z^k
 =\sum_{i=0}^n c_i(A)\frac{z^i}{(1-z)^{d+i+1}}.
\]
Multiplying by $(1-z)^{n+d+1}$ proves
\eqref{eq:general-numerator}.
\end{proof}

We will also use the corresponding interior count. For $k\ge1$,
strict inequalities in the fibers give
\begin{equation}\label{eq:general-interior-count}
 |E_{\cC(A)}(-k)|=
 \sum_{\substack{b_0+\cdots+b_d=k\\b_j\in\Z_{\ge1}}}
       \prod_{i=1}^n\left(\sum_{j=0}^d a_{ij}b_j-1\right).
\end{equation}
Every factor is nonnegative because $A$ has no zero row. Empty sums
are interpreted as zero. Ehrhart reciprocity takes the form
\begin{equation}\label{eq:general-reciprocity}
 \sum_{k\ge1}|E_{\cC(A)}(-k)|z^k
 =\frac{z^{n+d+1}h^*_{\cC(A)}(z^{-1})}{(1-z)^{n+d+1}}.
\end{equation}
The next three sections use these formulas for three choices of $A$.

\section{\texorpdfstring{Nonunimodal $h^*$-polynomials}{Nonunimodal h*-polynomials}}\label{sec:nonunimodality}

Set $n=d$ and consider the matrix $A_{\alpha,\beta}$ introduced in
Section~\ref{sec:introduction}. Using the concrete counting formula
of Section~\ref{sec:enumeration}, we searched by brute force over
$d,\alpha,\beta$ for a nonunimodal $h^*$-polynomial. The choice
$d=20$, $\alpha=50$, and $\beta=800$ gives the desired counterexample.
We verify it below by exact coefficient extraction.

\begin{restate}{thm:main-unimodality}
There exist infinitely many IDP lattice polytopes for which the $h^*$-polynomial is not unimodal.
\end{restate}

\begin{proof}
Take $d=20$, $\alpha=50$, and $\beta=800$.
By \Cref{prop:prisms,prop:triangulations}, the polytope
$P=\cC(A_{50,800})$ has dimension $40$ and admits a regular
unimodular flag triangulation. By \eqref{eq:general-count},
\[
 E_P(k)=\sum_{\substack{b_0+\cdots+b_{20}=k\\b_j\ge0}}
              \prod_{i=1}^{20}(50b_0+800b_i+1).
\]
Since $h_P^*(z)=(1-z)^{41}\sum_{k\ge0}E_P(k)z^k$, we have
\begin{equation}\label{eq:coefficient-extraction}
 h_r^*=\sum_{k=0}^r(-1)^{r-k}\binom{41}{r-k}E_P(k).
\end{equation}
Evaluating these finite sums for $r=18,19,20$ gives
\begin{align*}
 h_{18}^*&=949^{20}-2879\cdot899^{19}+1\,288\,210\cdot849^{18},\\
 h_{19}^*&=899^{20}-1829\cdot849^{19},\\
 h_{20}^*&=849^{20}.
\end{align*}
For a compact exact certificate of the strict trough, these integers satisfy
\[
 100(h_{18}^*-h_{19}^*)>h_{20}^*>0,
 \qquad 100(h_{20}^*-h_{19}^*)>h_{20}^*.
\]
In particular, $h_{18}^*>h_{19}^*<h_{20}^*$.

Appending a zero column to $A$ replaces $\cC(A)$ by a lattice
pyramid over it: the new layer consists of a single point.
A lattice pyramid has the same $h^*$-polynomial as its base, since
its Ehrhart series is the series of the base divided by $1-z$.
Repeating this operation gives counterexamples in every dimension
at least $40$, all within our class.
\end{proof}

\begin{theorem}\label{thm:smooth-unimodality}
There are smooth polytopes admitting regular unimodular flag triangulations and whose $h^*$-polynomials are not unimodal.
\end{theorem}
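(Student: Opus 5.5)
The matrix $A_{50,800}$ used in the proof of \Cref{thm:main-unimodality} has zero entries, so \Cref{prop:smooth} does not apply to it. I would perturb it into a matrix with all entries positive and keep the non-unimodality. Concretely, take $A'=[\,\alpha\mathbf1_d\mid \beta I_d+\gamma(J_d-I_d)\,]$, where $J_d$ is the all-ones matrix and $\gamma\ge1$ is a small positive integer (first try $\gamma=1$). With $d=20$, $\alpha=50$, $\beta=800$, every entry of $A'$ is positive. \Cref{prop:smooth} then gives smoothness, and \Cref{prop:triangulations} gives a regular unimodular flag triangulation.

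It remains to see that $h^*_{18}>h^*_{19}<h^*_{20}$ survives the perturbation. Formula \eqref{eq:general-numerator} expresses $h^*$ through the binomial-basis coefficients $c_\nu(A)$ of $F_A$. Each $c_\nu$ is a finite alternating sum of values $F_A(\mu)$, which are polynomials in the entries of $A$. So every $h^*_r$ is a polynomial in the matrix entries.

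The trough in \Cref{thm:main-unimodality} is robust. The proof certifies the margins $h_{18}^*-h_{19}^*>h_{20}^*/100$ and $h_{20}^*-h_{19}^*>h_{20}^*/100$, so both gaps are of relative size about $1/100$. Raising the off-diagonal zeros to $\gamma=1$ changes each linear factor $50b_0+800b_i+1$ by $\sum_{\ell\ne i}b_\ell$, i.e.\ by relative order $1/800$ in the dominant terms. I would therefore expect $h_{18},h_{19},h_{20}$ to shift by well under one percent relative to $h_{20}$.

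Rather than bound this perturbation analytically, I would simply recompute. Evaluate $E_{P'}(k)$ for $k\le 20$ from \eqref{eq:general-count} with the new matrix, and extract $h^*_{18},h^*_{19},h^*_{20}$ exactly via \eqref{eq:coefficient-extraction}. This is a finite exact integer computation, of the same size as the one already done. It yields explicit integers, and one checks $h^*_{18}>h^*_{19}<h^*_{20}$ directly.

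The main obstacle is that this perturbed matrix loses the structure that made the closed forms $949^{20}$, $899^{20}$, $849^{20}$ appear. The certificate is therefore a computation rather than a formula, and if $\gamma=1$ happened to destroy the trough, one would have to search over $(\alpha,\beta,\gamma)$ instead. A cleaner alternative is to make the matrix symmetric enough that the sum over $b$ still factors into a closed form. A natural choice is to raise the first column to $\alpha$ and all off-diagonal entries to $\gamma$, so each row becomes $\gamma+(\beta-\gamma)e_i$ plus $\alpha-\gamma$ on column $0$. Since $\sum_\ell b_\ell=k$, the factor $1+\gamma k+(\alpha-\gamma)b_0+(\beta-\gamma)b_i$ reduces to the previous form with shifted parameters. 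I would try that reduction first to obtain an analogous explicit expression and inequality.
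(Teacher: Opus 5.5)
\textbf{Verdict: there is a genuine gap.} Your smoothness and triangulation steps are fine. The non-unimodality step, however, fails for the matrix you actually propose, and the robustness heuristic behind it is wrong.

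\textbf{The trough disappears for your matrix.} Take $A'$ with $\alpha=50$, $\beta=800$, $\gamma=1$. The interior factor of row $i$ is $49b_0+799b_i+k-1$. The codegree is still $21$, so reciprocity gives $h^*_{20}=|E(-21)|=868^{20}$ and
\[
 h^*_{19}=|E(-22)|-41h^*_{20}
 =918^{20}+20\cdot1668\cdot869^{19}-41\cdot868^{20}\approx1.35\cdot868^{20}.
\]
Thus $h^*_{19}>h^*_{20}$, and the trough $h^*_{18}>h^*_{19}<h^*_{20}$ you intend to certify is gone.

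\textbf{Why the heuristic fails.} The first reason is cancellation. $h^*_{19}$ is the difference of two numbers of size roughly $42h^*_{20}$ and $41h^*_{20}$. Any relative change in the interior counts is therefore amplified about fortyfold, while the original margin $h^*_{20}-h^*_{19}$ was only about $1.3\%$ of $h^*_{20}$.

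The second reason is that your perturbation is not of relative size $1/800$. It adds $k-b_0-b_i$ (that is, $19$ or $20$ near $b=\mathbf 1$) to factors of size about $850$. When some $b_m$ increases, all $19$ spectator rows move from $868$ to $869$. This multiplies the dominant term $20\cdot1668/868\approx38.4$ by $(869/868)^{19}\approx1.022$, a shift of about $0.85$.

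For the same reason, your ``cleaner alternative'' does not reduce to $\cC(A_{\alpha-\gamma,\beta-\gamma})$. The constant $1+\gamma k$ depends on $k$, and this $k$-dependence is exactly what moves the trough. That matrix also coincides with $A'$ anyway.

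\textbf{The missing idea is scaling.} Keep the off-diagonal entries equal to $1$, but multiply $\alpha,\beta$ by a large $s$, so the perturbation becomes small relative to the entries. This is the paper's choice $\cC(A_{50s,800s}+J)$, whose row factors are $50sb_0+800sb_i+k+1$. The spectator effect then shrinks like $1/s$. At $s=100$, exact extraction gives $h^*_{18}>h^*_{19}<h^*_{20}$; the same reciprocity computation as above yields $h^*_{19}\approx0.965\,h^*_{20}$. In your parametrization this is $\alpha=5001$, $\beta=80001$, $\gamma=1$.

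As written, your argument rests on a candidate that fails, with a fallback ``search over $(\alpha,\beta,\gamma)$'' that is not a proof. You also do not lose closed forms under the perturbation. The top three coefficients involve only the handful of compositions of $21,22,23$ into $21$ positive parts, which makes this check cheap.
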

\begin{proof}
Let $J$ be the $20\times21$ all-ones matrix and consider
$P_s=\cC(A_{50s,800s}+J)$ for positive integers $s$.
All entries are positive, so each $P_s$ is smooth by
Proposition~\ref{prop:smooth}. Here
\[
 E_{P_s}(k)=\sum_{\substack{b_0+\cdots+b_{20}=k\\b_j\ge0}}
                \prod_{i=1}^{20}(50sb_0+800sb_i+k+1).
\]
Take $s=100$. By \eqref{eq:coefficient-extraction}, the coefficients
at indices $18,19,20$ satisfy
\begin{align*}
 100\bigl(h_{18}^*(P_{100})-h_{19}^*(P_{100})\bigr)&>h_{20}^*(P_{100})>0,\\
 100\bigl(h_{20}^*(P_{100})-h_{19}^*(P_{100})\bigr)&>3h_{20}^*(P_{100}).
\end{align*}
In particular, $h_{18}^*(P_{100})>h_{19}^*(P_{100})<h_{20}^*(P_{100})$:
the coefficients decrease and then increase at index $19$, so the
$h^*$-polynomial is not unimodal.
\end{proof}

\section{Non-log-concave Ehrhart series}\label{sec:series}

For positive integers $\alpha,\beta$, define
\[
 B_{\alpha,\beta}=
 \begin{bmatrix}
 \alpha&\alpha&0&0\\
 \alpha&0&\alpha&0\\
 \alpha&0&0&\alpha\\
 0&\beta&0&0\\
 0&0&\beta&0\\
 0&0&0&\beta
 \end{bmatrix}\in\Z_{\ge0}^{6\times4}.
\]
Let $J$ now denote the $6\times4$ all-ones matrix, and put
$P_s=\cC(B_{s,s}+J)$. These are smooth $9$-dimensional polytopes,
by \Cref{prop:prisms,prop:smooth}, and possess regular unimodular
flag triangulations by \Cref{prop:triangulations}.

\begin{restate}{thm:main-series}
There exist infinitely many IDP lattice polytopes whose Ehrhart series is not log-concave.
\end{restate}

\begin{proof}
The counting formula specializes to
\begin{equation}\label{eq:smooth-series-count}
 E_{P_s}(k)=\sum_{\substack{b_0+b_1+b_2+b_3=k\\b_i\ge0}}
 \prod_{i=1}^3\bigl(s(b_0+b_i)+k+1\bigr)\bigl(sb_i+k+1\bigr).
\end{equation}
For $k=1$, the composition $(1,0,0,0)$ contributes
$8(s+2)^3$; each of the other three compositions contributes
$16(s+2)^2$. Thus $E_{P_s}(1)$ is a polynomial in $s$ of degree
$3$ with leading coefficient $8$.

For $k=2$, at most four of the six coefficients
$b_0+b_i,b_i$ are nonzero; hence $\deg_s E_{P_s}(2)\le4$.
For $k=3$, all six coefficients are nonzero only for
$(b_0,b_1,b_2,b_3)=(0,1,1,1)$, whose contribution is $(s+4)^6$.
Consequently, $E_{P_s}(3)$ has degree $6$ and leading coefficient
$1$. It follows that
\[
 E_{P_s}(1)E_{P_s}(3)-E_{P_s}(2)^2=8s^9+O(s^8)>0
\]
for all sufficiently large integers $s$. Their volumes strictly
increase with $s$, so this gives infinitely many counterexamples,
all of which are smooth.

For concretenes, take $s=1000$. Evaluating
\eqref{eq:smooth-series-count} at the three relevant dilations gives
\begin{align*}
 E_{P_{1000}}(1)&=8\,096\,288\,256,\\
 E_{P_{1000}}(2)&=82\,112\,599\,942\,290,\\
 E_{P_{1000}}(3)&=1\,074\,277\,139\,082\,321\,920.
\end{align*}
In particular,
\[
 E_{P_{1000}}(1)E_{P_{1000}}(3)-E_{P_{1000}}(2)^2
 =1\,955\,178\,315\,558\,917\,866\,776\,927\,420>0.\qedhere
\]
\end{proof}

\section{Gorenstein polytopes and interior Ehrhart series}\label{sec:interior}

Index the coordinates of $\Z^4$ by $0,1,2,3$. For a positive integer
$m$, let $A_m$ be the $6m\times4$ matrix consisting of $m$ copies
of each of the six rows
\[
 e_0+e_i,\quad 2e_i\qquad(1\le i\le3),
\]
and put $R_m=\cC(A_m)$. All rows sum to two. Therefore
\Cref{prop:prisms,prop:triangulations,prop:row-two} show that $R_m$
is a $(6m+3)$-dimensional Gorenstein IDP polytope of index $4$
admitting a regular unimodular flag triangulation. Moreover,
\begin{equation}\label{eq:R-translation}
 |E_{R_m}(-k-4)|=E_{R_m}(k)\qquad(k\ge0).
\end{equation}

Writing $E_m(k)=E_{R_m}(k)$, \eqref{eq:general-count} becomes
\[
 E_m(k)=\sum_{\substack{b_0+b_1+b_2+b_3=k\\b_i\ge0}}
      \prod_{i=1}^3(b_0+b_i+1)^m(2b_i+1)^m.
\]
Grouping the compositions for $k=1,2,3$ gives
\begin{align}
 E_m(1)&=8^m+3\cdot6^m,\label{eq:R-E1}\\
 E_m(2)&=27^m+6\cdot36^m+3\cdot15^m,\label{eq:R-E2}\\
 E_m(3)&=64^m+3\cdot108^m+3\cdot80^m+3\cdot162^m\notag\\
 &\quad+3\cdot28^m+6\cdot90^m+216^m.
 \label{eq:R-E3}
\end{align}

\begin{restate}{thm:main-gorenstein}
There exist Gorenstein IDP polytopes whose $h^*$-polynomial is not log-concave.
\end{restate}
\begin{proof}
Take $R=R_{12}$, whose dimension is $75$. The preceding formulas give
\begin{align*}
 E_R(1)&=75\,249\,823\,744,\\
 E_R(2)&=28\,580\,771\,904\,240\,372\,372,\\
 E_R(3)&=11\,304\,054\,820\,081\,833\,547\,500\,261\,376.
\end{align*}
Since $h_R^*(z)=(1-z)^{76}\sum_{k\ge0}E_R(k)z^k$, its first
three nonconstant coefficients are
\begin{align*}
 h_1^*&=E_R(1)-76=75\,249\,823\,668,\\
 h_2^*&=E_R(2)-76E_R(1)+\binom{76}{2}\\
      &=28\,580\,766\,185\,253\,770\,678,\\
 h_3^*&=E_R(3)-76E_R(2)+\binom{76}{2}E_R(1)-\binom{76}{3}\\
      &=11\,304\,052\,647\,943\,383\,287\,229\,561\,204.
\end{align*}
These satisfy
\[
 h_1^*h_3^*-(h_2^*)^2
 =33\,767\,772\,755\,382\,700\,161\,244\,430\,421\,001\,196\,588>0.
\]
Thus $h_R^*(z)$ is not log-concave.
\end{proof}

Brenti's conjecture concerns the numerator of the Hilbert series of a
standard graded Gorenstein domain. The Ehrhart ring of $R$ is such a
domain: IDP makes it standard graded, and the Gorenstein property of
$R$ makes its Ehrhart ring Gorenstein. Its Hilbert numerator is precisely
$h_R^*(z)$, so the preceding example disproves
\cite[Conjecture~5.2]{brenti-update}.

\begin{corollary}\label{cor:interior}
There exists a Gorenstein polytope admitting a regular unimodular
flag triangulation whose interior Ehrhart series is not log-concave.
\end{corollary}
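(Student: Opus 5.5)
We take $R=R_{12}$, the $75$-dimensional polytope used in the proof of \Cref{thm:main-gorenstein}. By \Cref{prop:prisms,prop:triangulations,prop:row-two}, $R$ is Gorenstein of index $4$ and admits a regular unimodular flag triangulation, so only the failure of log-concavity needs an argument. The interior Ehrhart series is the sequence $|E_R(-1)|,|E_R(-2)|,\ldots$, and the plan is to exhibit a violation of log-concavity in it using the shift identity \eqref{eq:R-translation}.

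First, since $R$ has codegree $4$, we have $|E_R(-1)|=|E_R(-2)|=|E_R(-3)|=0$, and the interior sequence starts at $k=4$. By \eqref{eq:R-translation}, $|E_R(-k-4)|=E_R(k)$ for $k\ge0$. So the interior sequence from index $4$ on is exactly $E_R(0),E_R(1),E_R(2),\ldots$ shifted by four. Concretely,
\[
 |E_R(-5)|=E_R(1),\quad |E_R(-6)|=E_R(2),\quad |E_R(-7)|=E_R(3).
\]
Thus a log-concavity violation at the indices $5,6,7$ is the same as the inequality $E_R(1)E_R(3)>E_R(2)^2$.

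Second, we verify this inequality with the values of $E_R(1),E_R(2),E_R(3)$ already computed from \eqref{eq:R-E1}, \eqref{eq:R-E2}, \eqref{eq:R-E3} with $m=12$. A rough size check suggests it holds with room to spare. We have $E_R(1)\approx 7.52\times10^{10}$ and $E_R(3)\approx 1.13\times10^{28}$, so their product is about $8.5\times10^{38}$. On the other side, $E_R(2)\approx 2.86\times10^{19}$, so $E_R(2)^2\approx 8.17\times10^{38}$. The exact difference is a routine integer computation, which we would record explicitly as in the proof of \Cref{thm:main-gorenstein}.

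Heuristically, the inequality can be seen from the dominant terms $216^m$, $36^m$ and $8^m$. As $m\to\infty$, $E_m(1)E_m(3)\sim 1728^m$ while $E_m(2)^2\sim 36\cdot1296^m$. Hence the inequality even holds for all large $m$, which gives infinitely many examples $R_m$. The only delicate step is the exact arithmetic for the chosen $m$; the heuristic already settles it for sufficiently large $m$, which suffices for existence.
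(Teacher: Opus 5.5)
Your proposal is correct and matches the paper's proof: the same polytope $R=R_{12}$, the same use of \eqref{eq:R-translation} to identify $|E_R(-5)|,|E_R(-6)|,|E_R(-7)|$ with $E_R(1),E_R(2),E_R(3)$, and the same numerical check of $E_R(1)E_R(3)>E_R(2)^2$ (your truncated values already certify it, since $7.52\cdot 1.13>2.86^2$). Your asymptotic remark adds something the paper does not state here, and it is easily made rigorous: $E_m(1)E_m(3)>1728^m$ and $E_m(2)^2\le 100\cdot 1296^m$, so the violation holds for every $m\ge 17$.
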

\begin{proof}
Use the same polytope $R=R_{12}$. By \eqref{eq:R-translation},
$|E_R(-5)|=E_R(1)$, $|E_R(-6)|=E_R(2)$, and $|E_R(-7)|=E_R(3)$.
The values above give
\begin{multline*}
 |E_R(-5)E_R(-7)|-|E_R(-6)|^2\\
 =33\,767\,610\,161\,455\,765\,017\,372\,268\,884\,710\,005\,360>0.\qedhere
\end{multline*}
\end{proof}

This answers \cite[Question~5.12(b)]{ferroni-higashitani}
affirmatively. Part~(a) asks specifically for
$|E_P(-2)|^2<|E_P(-1)||E_P(-3)|$; the preceding example does not settle it,
since $|E_R(-1)|=|E_R(-2)|=|E_R(-3)|=0$.

\begin{proposition}\label{prop:smooth-interior}
There exists a smooth member of the class with a non-log-concave
interior Ehrhart series.
\end{proposition}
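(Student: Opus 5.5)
The plan is to mimic the proof of Theorem~\ref{thm:main-series}, but applied to the interior count \eqref{eq:general-interior-count} instead of \eqref{eq:general-count}, using the row pattern from Section~\ref{sec:interior}. For positive integers $m,t$, let $A_m$ be the $6m\times 4$ matrix of Section~\ref{sec:interior}, and consider $Q_{m,t}=\cC(tA_m+J)$, where $J$ is the all-ones $6m\times4$ matrix. All entries of $tA_m+J$ are positive, so $Q_{m,t}$ is smooth by Proposition~\ref{prop:smooth}. By Proposition~\ref{prop:triangulations} it also has a regular unimodular flag triangulation, so it lies in our class. I will exhibit a failure of log-concavity at the dilations $k=5,6,7$, where the interior series first becomes nonzero. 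No interior points occur before $k=4$, since all four $b_j\ge1$.

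Write $\ell_i(b)$ for the linear form given by row $i$ of $A_m$; these are $b_0+b_i$ and $2b_i$ for $1\le i\le 3$, each repeated $m$ times. Then \eqref{eq:general-interior-count} gives
\[
 |E_{Q_{m,t}}(-k)|=\sum_{\substack{b_0+\cdots+b_3=k\\ b_j\ge1}}\ \prod_{i}\bigl(t\,\ell_i(b)+k-1\bigr).
\]
Every $\ell_i(b)$ is at least $2$ when all $b_j\ge1$. Hence this is a polynomial in $t$ of degree exactly $6m$, with leading coefficient
\[
 L_m(k)=\sum_{b}\prod_{i=1}^3\bigl((b_0+b_i)\cdot 2b_i\bigr)^m .
\]
It therefore suffices to find $m$ with $L_m(5)L_m(7)>L_m(6)^2$. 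The difference $|E(-5)||E(-7)|-|E(-6)|^2$ is a polynomial in $t$ of degree $12m$ whose leading coefficient is $L_m(5)L_m(7)-L_m(6)^2$. If that coefficient is positive, the difference is positive for all large $t$.

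To compare these leading coefficients, I list the per-copy products $\prod_i(b_0+b_i)\,2b_i$ over the compositions at each level.
\begin{itemize}
 \item At $k=5$ the largest product is $216$, attained at $b=(2,1,1,1)$; the other compositions give $192$.
 \item At $k=6$ the largest product is $576$, attained at compositions such as $(2,2,1,1)$ and $(1,2,2,1)$; the others give $512$ or $384$.
 \item At $k=7$ the largest product is $1728$, attained at $(1,2,2,2)$; the others give $1536$, $1280$, and so on.
\end{itemize}
Since $216\cdot1728=373248>331776=576^2$, the dominant exponential terms give $L_m(5)L_m(7)-L_m(6)^2\sim C\cdot 373248^m$ with $C>0$. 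This is positive for all large $m$.

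The main obstacle is making this effective: bounding the lower-order exponential terms, with their multiplicities, to pin down an explicit $m$, and then an explicit $t$. I would first enumerate all compositions of $5,6,7$ into four positive parts (there are $4$, $10$ and $20$ of them) to obtain $L_m(5),L_m(6),L_m(7)$ as explicit sums of exponentials, as in \eqref{eq:R-E1}--\eqref{eq:R-E3}. Then I would pick a moderate $m$, probably around $10$, and a concrete $t$, evaluate the three interior counts exactly from the formula above, and certify the strict inequality numerically, exactly as was done for $R_{12}$.
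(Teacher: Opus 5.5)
Your proposal is correct. It works in the same family as the paper (the paper's $S=\cC(1000A_{30}+J)$ is exactly your $Q_{30,1000}$), but it certifies the inequality by a different route.

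\textbf{How the two proofs differ.} The paper fixes $m=30$ and $t=1000$. It then certifies $107|E_S(-6)|^2<100|E_S(-5)||E_S(-7)|<108|E_S(-6)|^2$ by exact machine evaluation of the $4$, $10$, $20$ summands $W_k(b)^{30}$. You argue asymptotically instead: first $t\to\infty$, which isolates the leading coefficient $L_m(k)$, then $m\to\infty$, where the dominant base is $216\cdot1728$. This is the same kind of argument the paper uses for Theorem~\ref{thm:main-series}.

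Your argument is already a complete existence proof. Once $L_m(5)L_m(7)>L_m(6)^2$ holds for a single $m$, every sufficiently large $t$ works. So the effective step you call the main obstacle is not needed for the statement. Your route buys a computer-free proof; the paper's buys an explicit polytope with a checkable certificate.

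\textbf{Your enumeration checks out.} You have $L_m(5)=216^m+3\cdot192^m$, $L_m(6)=6\cdot576^m+512^m+3\cdot384^m$, and $L_m(7)=1728^m+3\cdot1536^m+3\cdot1280^m+6\cdot1152^m+3\cdot1080^m+1000^m+3\cdot640^m$.

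\textbf{The effective step: $m$ around $10$ will fail.} Because $216\cdot1536=192\cdot1728=576^2=331776$, the expansion has a large negative subleading term:
\[
 L_m(5)L_m(7)-L_m(6)^2=373248^m-30\cdot331776^m-3\cdot294912^m+O(276480^m).
\]
Since $373248/331776=9/8$, you need $(9/8)^m$ to exceed roughly $30$, which forces $m\ge29$. For $m=10$ the top coefficient in $t$ is negative, so large $t$ gives the wrong inequality. This threshold is consistent with the paper's choice $m=30$.

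\textbf{A minor slip.} The interior series first becomes nonzero at $k=4$, via $b=(1,1,1,1)$, not at $k=5$. This does not affect your argument.
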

\begin{proof}
Let $J$ denote the $180\times4$ all-ones matrix and take
$S=\cC(1000A_{30}+J)$. This polytope has dimension $183$ and is
smooth by \Cref{prop:smooth}; it also has a regular unimodular flag
triangulation. For positive integers $b_0,b_1,b_2,b_3$ summing to $k$,
put
\[
 W_k(b)=\prod_{i=1}^3
       \bigl(1000(b_0+b_i)+k-1\bigr)
       \bigl(2000b_i+k-1\bigr).
\]
Formula~\eqref{eq:general-interior-count} gives the exact finite sums
\begin{equation}\label{eq:smooth-interior-certificate}
 |E_S(-k)|=\sum_{\substack{b_0+b_1+b_2+b_3=k\\b_i\ge1}}W_k(b)^{30}.
\end{equation}
For $k=5,6,7$ there are respectively $4,10,20$ summands. Evaluating
these integer sums gives
\[
 107|E_S(-6)|^2<100|E_S(-5)||E_S(-7)|<108|E_S(-6)|^2,
\]
which in particular proves $|E_S(-6)|^2<|E_S(-5)||E_S(-7)|$.
The verification script supplied with the paper evaluates these sums
both directly and after grouping equal values of $W_k(b)$.
\end{proof}

\subsection*{Declaration of AI and some details of the main findings}

This paper benefited from substantial use of Codex and ChatGPT 5.6 Sol. More than fifty prompts across one month were needed. I fed ChatGPT with personal notes that I had written in 2023 and which did not make it to the final version of my survey with Higashitani \cite{ferroni-higashitani}, and I attempted for more than two weeks to disprove Conjecture~\ref{conj:stanley} by modifying the examples found by Hofscheier, Nill, and Kurylenko \cite{hofscheier-kurylenko-nill1,hofscheier-kurylenko-nill2}. I started seeing progress only when I changed the target and started started focusing on my own conjecture with Higashitani on the log-concavity of Ehrhart series for IDP polytopes. The breakthrough happened when I asked ChatGPT about Nakajima polytopes. The results of Haase, Paffenholz, Piechnik, and Santos \cite{haase-paffenholz-piechnik-santos} were crucial. After having established counterexamples to \cite[Conjecture~1.2]{ferroni-higashitani}, the ``big picture'' became clear. I realized that the polytope discovered by ChatGPT was a Cayley sum of rectangular prisms, and from there the remaining findings, including the counterexamples to Conjecture~\ref{conj:stanley} came in quick succession. Except for a few calculations and mild superficial corrections, which were taken care of with the help of AI, the paper writing is human made. 

\subsection*{Acknowledgments}

I am thankful to my coauthor Akihiro Higashitani, with whom I wrote the survey \cite{ferroni-higashitani}. What I learned from writing that paper with him proved extremely useful for the present paper. I am also thankful to Karim Adiprasito and Igor Pak for useful conversations in July and August 2023. I also want to thank Federico Castillo for useful feedback on a preliminary version of this manuscript.
This research was conducted using the ChatGPT subscription that I pay from my own personal money (unfortunately).

\bibliographystyle{amsalpha}
\bibliography{bibliography}
\end{document}